\documentclass[12pt]{amsart}
\usepackage{amssymb}
\textwidth=14.5cm
\textheight=22.5cm
\oddsidemargin=0.8cm
\evensidemargin=0.8cm
\topmargin=1cm
\theoremstyle{plain}
 \newtheorem{thm}{Theorem}[section]
 
 \newtheorem{lem}[thm]{Lemma}
 \newtheorem{prop}[thm]{Proposition}

\theoremstyle{definition}

\theoremstyle{remark}

\begin{document}
\title[on a norm inequality for a positive block-matrix.]
{on a norm inequality for a positive block-matrix.}

\author[Tomohiro Hayashi]{{Tomohiro Hayashi} }
\address[Tomohiro Hayashi]
{Nagoya Institute of Technology, 
Gokiso-cho, Showa-ku, Nagoya, Aichi, 466-8555, Japan}

\email[Tomohiro Hayashi]{hayashi.tomohiro@nitech.ac.jp}

\baselineskip=17pt

\maketitle

\begin{abstract}
For a positive semidefinite matrix 
$H=
\begin{bmatrix}
A&X\\
X^{*}&B
\end{bmatrix}
$, we consider the norm inequality 
$
||H||\leq  
||A+B||
$. We show that this inequality holds under certain conditions. 
Some related topics are also investigated. 
\end{abstract}

\section{Introduction}

In this paper we investigate the following problems posed by Minghua Lin \cite{L2}. 

\noindent{\bf Problem 1}. 
Let $A>0$, $B>0$ and $X$ be matrices satisfying 
$H=
\begin{bmatrix}
A&X\\
X^{*}&B
\end{bmatrix}\geq 0
$, or equivalently 
$B\geq X^{*}A^{-1}X$. Under what condition 
can we conclude 
$
||H||\leq  
||A+B||
$?  

\noindent{\bf Problem 2}. 
Let $A>0$ and $X$ be matrices. 
Under what condition 
can we conclude  
$$
||A+A^{-\frac{1}{2}}XX^{*}A^{-\frac{1}{2}}||\leq  
||A+X^{*}A^{-1}X||?
$$  
As explained later the problem 2 is a special case of the problem 1. 
It is shown in \cite{BL}\cite{LW1} 
that if $X=X^{*}$, then the inequality 
in the problem 1 holds. Hiroshima \cite{H} showed that 
if we have both  
$
\begin{bmatrix}
A&X\\
X^{*}&B
\end{bmatrix}\geq 0
$ and 
$
\begin{bmatrix}
A&X^{*}\\
X&B
\end{bmatrix}\geq 0
$, then the inequality 
in the problem 1 is true. 
(See also \cite{L1} and \cite{LW2} for more information on this topic.) 
Related to these problems, Lin conjectured the following. 

\noindent{\bf Lin's conjecture (\cite{L2} 
See also \cite[Conjecture 2.14]{BLL} ).} 
If $X$ is normal and all matrices are $2\times2$, 
then the inequality in the problem 1 is true. 

Lin showed that this conjecture is OK in the case that 
$B=X^{*}A^{-1}X$. More generally it is shown in \cite{M} that 
the inequality in the problem 1 is true in the case that the numerical range 
of $X$ is a line segment. 
Here we should remark that in this case $X$ must be normal. On the 
other hand if $X$ is $2\times2$ and normal, then its numerical range 
is a line segment.  Thus we know that 
Lin's conjecture is true. 
In \cite{BM} Bourin and Mhanna generalized this theorem. 

There are two main results in this paper. The first one is a partial answer to the 
problem 1. We show that if  $||A+X^{*}A^{-1}X||\geq||A+XA^{-1}X^{*}||$ then the inequality 
$||H||\leq||A+B||$ is true. Moreover we show that 
if  $||A+X^{*}A^{-1}X||\leq||A+XA^{-1}X^{*}||$ then we have 
$$\Bigl\Vert
\begin{bmatrix}
A&X^{*}\\
X&C
\end{bmatrix}
\Bigr\Vert\leq ||A+C||
$$ for any $C\geq XA^{-1}X^{*}$. 
The second main result is as follows. We expect that if the inequality in the problem 1 
holds for any $A>0$ and $B>0$ with $B\geq X^{*}A^{-1}X$, then $X$ must be normal. 
We show that this is true if the eigenvalues of $XX^{*}$ are distinct.  

The author wishes to express his hearty gratitude to 
Professor Minghua Lin 
for his kind explanation and advice. The author also would like to 
thank  
Professors J.-C. Bourin and Antoine Mhanna 
for valuable comments.

\section{Main results.}

Throughout this paper we consider $n\times n$-matrices acting on ${\Bbb C}^{n}$. 
We denote by $||A||$ the operator norm of the matrix $A$. That is, 
$||A||^2$ is the maximal eigenvalue of $A^{*}A$. 
For two vectors $\xi,\eta\in{\Bbb C}^{n}$ their inner product is denoted by 
$\langle \xi,\eta\rangle$. 
We define the norm of the vector $\xi\in{\Bbb C}^{n}$ by 
$||\xi||=\langle\xi,\xi\rangle^{\frac{1}{2}}$. 
The matrix $A$ is called positive semidefinite if 
$\langle A\xi,\xi\rangle\geq0$ for any $\xi\in{\Bbb C}^{n}$ and we use the notation 
$A\geq0$. We also use the notation $A>0$ if $A\geq0$ and $A$ is invertible. For 
two self-adjoint matrices $A$ and $B$ the order $A\leq B$ is defined by $B-A\geq0$.

At first we give some remarks on the probelms. 
\begin{enumerate}
\item 
The problem 2 is a special case of the problem 1. Indeed we have 
$
\begin{bmatrix}
A&X\\
X^{*}&X^{*}A^{-1}X
\end{bmatrix}\geq 0
$ and 
\begin{align*}
\Bigl\|&
\begin{bmatrix}
A&X\\
X^{*}&X^{*}A^{-1}X
\end{bmatrix}
\Bigr\|
=
\Bigl\|
\begin{bmatrix}
A^{\frac{1}{2}}\\
X^{*}A^{-\frac{1}{2}}
\end{bmatrix}
\begin{bmatrix}
A^{\frac{1}{2}}&
A^{-\frac{1}{2}}X
\end{bmatrix}
\Bigr\|\\
&=
\Bigl\|\begin{bmatrix}
A^{\frac{1}{2}}&
A^{-\frac{1}{2}}X
\end{bmatrix}
\begin{bmatrix}
A^{\frac{1}{2}}\\
X^{*}A^{-\frac{1}{2}}
\end{bmatrix}
\Bigr\|=||A+A^{-\frac{1}{2}}XX^{*}A^{-\frac{1}{2}}||.
\end{align*}

\item 
In general the inequality in the problem 2 does not hold.  
Assume that the inequality in the problem 2 
is true for any matrices $A>0$ and $X$. 
By this assumption we have 
$$
||A+A^{-\frac{1}{2}}XX^{*}A^{-\frac{1}{2}}||\leq  ||A+X^{*}A^{-1}X||
$$ 
We also have 
\begin{align*}
||
A+A^{-\frac{1}{2}}&
(A^{\frac{1}{2}}X^{*}A^{-\frac{1}{2}})(A^{\frac{1}{2}}X^{*}A^{-\frac{1}{2}})^{*}
A^{-\frac{1}{2}}
||\\ 
&\leq 
||
A+(A^{\frac{1}{2}}X^{*}A^{-\frac{1}{2}})^{*}A^{-1}(A^{\frac{1}{2}}X^{*}A^{-\frac{1}{2}})
||
\end{align*} 
and hence 
$$
 ||A+X^{*}A^{-1}X||\leq ||A+A^{-\frac{1}{2}}XX^{*}A^{-\frac{1}{2}}||.
$$
Therefore we conclude that 
$$
||A+A^{-\frac{1}{2}}XX^{*}A^{-\frac{1}{2}}|
=||A+X^{*}A^{-1}X||.
$$ 
Consider the case $X=U$ unitary. Then we have 
$$
||A+A^{-1}||
=||A+U^{*}A^{-1}U||.
$$
If $A$ is a $3\times3$ diagonal matrix and $U$ is a permutation matrix, 
then it is easy to construct a counterexample. 
\end{enumerate}

The following is a key lemma for our investigation. 

\begin{lem}
Let $A>0$, $B>0$ and $X$ be matrices. 
(We don't  have to assume $B\geq X^{*}A^{-1}X$.) 
If $\Bigl\|
\begin{bmatrix}
A&X\\
X^{*}&B
\end{bmatrix}
\Bigr\|
>||A+B||
$, 
then we have 
$$
||A+XA^{-1}X^{*}||\geq 
\Bigl\|
\begin{bmatrix}
A&X\\
X^{*}&B
\end{bmatrix}
\Bigr\|>||A+B||.
$$ 
In particular if $B\geq X^{*}A^{-1}X$, then we have 
$$
||A+XA^{-1}X^{*}||>||A+X^{*}A^{-1}X||.
$$

\end{lem}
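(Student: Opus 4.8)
The plan is to extract from a top eigenvector of $H=\begin{bmatrix}A&X\\X^{*}&B\end{bmatrix}$ a vector on which the Rayleigh quotient of $A+XA^{-1}X^{*}$ is at least $\|H\|$, using the strict inequality $\|H\|>\|A+B\|$ to push an operator inequality through the substitution $X^{*}u=(\lambda I-B)v$. First I would note that, since $A,B\geq 0$, one has $\|H\|=\lambda_{\max}(H)$: if $(u,v)$ is a unit eigenvector for $\lambda_{\min}(H)$ then $(u,-v)$ is again a unit vector and $\langle H(u,-v),(u,-v)\rangle+\lambda_{\min}(H)=2(\langle Au,u\rangle+\langle Bv,v\rangle)\geq 0$, so $\lambda_{\max}(H)\geq-\lambda_{\min}(H)$. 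Now write $\lambda:=\lambda_{\max}(H)=\|H\|$ and let $\xi=(u,v)$ be a corresponding unit eigenvector, so that $Au+Xv=\lambda u$ and $X^{*}u+Bv=\lambda v$. The hypothesis $\lambda>\|A+B\|\geq\lambda_{\max}(A)$ forces $u\neq 0$, since $u=0$ would give $Bv=\lambda v$ with $v\neq 0$, hence $\lambda\leq\lambda_{\max}(B)\leq\|A+B\|$, a contradiction.

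The heart of the matter is a short computation with these two equations. Put $C:=\lambda I-B$. The second equation says $X^{*}u=Cv$, while the first gives $\langle Au,u\rangle=\lambda\|u\|^{2}-\langle Xv,u\rangle=\lambda\|u\|^{2}-\langle Cv,v\rangle$; therefore
$$\langle(A+XA^{-1}X^{*})u,u\rangle=\langle Au,u\rangle+\langle A^{-1}X^{*}u,X^{*}u\rangle=\lambda\|u\|^{2}-\langle Cv,v\rangle+\langle A^{-1}Cv,Cv\rangle .$$
Hence the desired bound $\langle(A+XA^{-1}X^{*})u,u\rangle\geq\lambda\|u\|^{2}$ is equivalent to $\langle A^{-1}Cv,Cv\rangle\geq\langle Cv,v\rangle$. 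This is precisely where the hypothesis enters: $\lambda>\|A+B\|$ makes $C=\lambda I-B>0$ (so $C$ is invertible) and $\lambda I\geq A+B$, i.e. $0<A\leq C$; by operator monotonicity of the inverse, $A^{-1}\geq C^{-1}$, and applying this to $z:=Cv$ gives $\langle A^{-1}Cv,Cv\rangle=\langle A^{-1}z,z\rangle\geq\langle C^{-1}z,z\rangle=\langle Cv,v\rangle$. Since $u\neq 0$ and $A+XA^{-1}X^{*}$ is positive semidefinite, this yields $\|A+XA^{-1}X^{*}\|\geq\lambda=\|H\|>\|A+B\|$, which is the first assertion.

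For the last statement, suppose in addition that $B\geq X^{*}A^{-1}X$. Then $0\leq A+X^{*}A^{-1}X\leq A+B$, so $\|A+X^{*}A^{-1}X\|\leq\|A+B\|$, and combining this with what was just proved gives $\|A+XA^{-1}X^{*}\|\geq\|H\|>\|A+B\|\geq\|A+X^{*}A^{-1}X\|$. I expect the only nontrivial step to be the computation in the middle paragraph, together with the observation that the single hypothesis $\lambda>\|A+B\|$ simultaneously encodes $C>0$ and $C\geq A$; the opening reduction ($\|H\|=\lambda_{\max}(H)$ and $u\neq 0$) and the closing chain of inequalities are routine.
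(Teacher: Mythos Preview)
Your proof is correct and essentially the same as the paper's: both extract a top eigenvector $(u,v)$ of $H$ and exploit the operator inequality $A^{-1}\geq(\lambda I-B)^{-1}$ that follows from $\lambda>\|A+B\|$. The only cosmetic differences are that the paper first identifies $u$ as an actual eigenvector of $A+X(\lambda I-B)^{-1}X^{*}$ with eigenvalue $\lambda$ before comparing to $A+XA^{-1}X^{*}$, whereas you bound the Rayleigh quotient of $A+XA^{-1}X^{*}$ directly, and you additionally justify $\|H\|=\lambda_{\max}(H)$, which the paper uses without comment.
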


After finishing this paper the author learned that 
there is a similar result in \cite{GLRT} in the case 
$B=k-A$ for some positive constant $k$.

\begin{proof} 
We set 
$
H=\begin{bmatrix}
A&X\\
X^{*}&B
\end{bmatrix}
$ and $\lambda=||H||$. 
By the assumption, 
we have $\lambda>||A+B||$. 
We can find two vectors $\xi$ and $\eta$ 
such that $||\xi||^{2}+||\eta||^{2}\not=0$ and 
$$
\begin{bmatrix}
A&X\\
X^{*}&B
\end{bmatrix}
\begin{bmatrix}
\xi\\
\eta
\end{bmatrix}=
\lambda
\begin{bmatrix}
\xi\\
\eta
\end{bmatrix}.
$$
Then we get 
$$
A\xi+X\eta=\lambda\xi,
\ \ \ \ \ \ 
X^{*}\xi+B\eta=\lambda\eta.
$$
Since $\lambda>||A+B||$, both $\lambda-A$ 
and $\lambda-B$ are invertible. 
Then we can rewrite the above relations as 
$$
(\lambda-A)^{-1}X\eta=\xi,
\ \ \ \ \ \ 
(\lambda-B)^{-1}X^{*}\xi=\eta.
$$
Therefore we get 
$$
(\lambda-A)^{-1}X(\lambda-B)^{-1}X^{*}\xi=\xi
$$
and hence 
$$
X(\lambda-B)^{-1}X^{*}\xi=\lambda\xi-A\xi.
$$ 
Thus we have 
$$
(A+X(\lambda-B)^{-1}X^{*})\xi=\lambda\xi.
$$ 
Here we remark that $\xi\not=0$. Indeed 
recall the relation 
$(\lambda-B)^{-1}X^{*}\xi=\eta$. 
By this equality, if $\xi=0$, then we must have $\eta=0$. 
This contradicts the fact $||\xi||^{2}+||\eta||^{2}\not=0$. 
So we conclude 
$$
||A+X(\lambda-B)^{-1}X^{*}||\geq\lambda>||A+B||.
$$
Since $A+B\leq \lambda$, we have $(\lambda-B)^{-1}\leq A^{-1}$. 
Thus we get 
$$
||A+XA^{-1}X^{*}||\geq\lambda>||A+B||.
$$

\end{proof}

By this lemma, we have the following. 
\begin{thm}
Let $A>0$ and $X$ be matrices. We set 
$$
\alpha=||A+X^{*}A^{-1}X||,\ \ \ \ 
\beta=||A+XA^{-1}X^{*}||
$$
Then we have the following. 
\begin{enumerate}
\item If $\alpha>\beta$, 
then for any $B\geq X^{*}A^{-1}X$ we have 
$$
\Bigl\Vert
\begin{bmatrix}
A&X\\
X^{*}&B
\end{bmatrix}
\Bigr\Vert\leq 
||A+B||. \eqno{(1)}
$$
\item 
If $\alpha<\beta$, 
then for any $C\geq XA^{-1}X^{*}$ we have 
$$
\Bigl\Vert
\begin{bmatrix}
A&X^{*}\\
X&C
\end{bmatrix}
\Bigr\Vert\leq 
||A+C||. \eqno{(2)}
$$
\item If 
$\alpha=\beta$, then for any $B\geq X^{*}A^{-1}X$ 
and $C\geq XA^{-1}X^{*}$ 
we have 
$$
\Bigl\Vert
\begin{bmatrix}
A&X\\
X^{*}&B
\end{bmatrix}
\Bigr\Vert\leq 
||A+B||,\ \ \ \  
\Bigl\Vert
\begin{bmatrix}
A&X^{*}\\
X&C
\end{bmatrix}
\Bigr\Vert\leq 
||A+C||. 
$$
\end{enumerate} 
In particular either the inequality (1) or (2) is always true. 
\end{thm}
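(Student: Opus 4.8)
The plan is to derive all three parts directly from the Key Lemma, reading its final assertion contrapositively. For part (i), I would argue by contradiction: suppose $\alpha>\beta$ yet there exists some $B\geq X^{*}A^{-1}X$ for which $\Bigl\|\begin{bmatrix}A&X\\X^{*}&B\end{bmatrix}\Bigr\|>\|A+B\|$. If this $B$ happens not to be invertible, I would first replace it by $B+\varepsilon I$, which still dominates $X^{*}A^{-1}X$ and, by continuity of the operator norm, still violates the desired inequality for all sufficiently small $\varepsilon>0$; this puts us in the situation covered by the Key Lemma, whose hypothesis is $B>0$. The Lemma then yields $\|A+XA^{-1}X^{*}\|>\|A+X^{*}A^{-1}X\|$, that is $\beta>\alpha$, contradicting $\alpha>\beta$. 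Hence the inequality (1) holds for every $B\geq X^{*}A^{-1}X$.

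For part (ii), I would exploit the symmetry $X\leftrightarrow X^{*}$. Put $Y=X^{*}$. Then the quantity playing the role of ``$\alpha$'' for $Y$ is $\|A+Y^{*}A^{-1}Y\|=\|A+XA^{-1}X^{*}\|=\beta$, and the quantity playing the role of ``$\beta$'' for $Y$ is $\|A+YA^{-1}Y^{*}\|=\|A+X^{*}A^{-1}X\|=\alpha$. So the hypothesis $\alpha<\beta$ says precisely that $Y$ satisfies the hypothesis of part (i). Applying part (i) to $Y$ gives, for every $C\geq Y^{*}A^{-1}Y=XA^{-1}X^{*}$, that $\Bigl\|\begin{bmatrix}A&Y\\Y^{*}&C\end{bmatrix}\Bigr\|\leq\|A+C\|$, which is exactly the inequality (2).

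Part (iii) is the case $\alpha=\beta$, and here the same contradiction argument applies in both directions: if $\Bigl\|\begin{bmatrix}A&X\\X^{*}&B\end{bmatrix}\Bigr\|>\|A+B\|$ for some $B\geq X^{*}A^{-1}X$, the Key Lemma forces $\beta>\alpha$, which is impossible; and the $X\leftrightarrow X^{*}$ substitution above rules out $\Bigl\|\begin{bmatrix}A&X^{*}\\X&C\end{bmatrix}\Bigr\|>\|A+C\|$ for $C\geq XA^{-1}X^{*}$ in the same way. The final sentence of the theorem is then immediate: for any $A$ and $X$ one has either $\alpha\geq\beta$ or $\alpha\leq\beta$, and in the first case (1) holds by (i) or (iii), in the second case (2) holds by (ii) or (iii).

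I expect the only genuine subtlety to be the gap between the invertibility hypothesis $B>0$ (resp.\ $C>0$) in the Key Lemma and the mere positive-semidefiniteness allowed in the theorem; this is dispatched by the $\varepsilon$-perturbation indicated above, after which each part is a one-line application of the Lemma together with the $X\leftrightarrow X^{*}$ symmetry. No nontrivial estimate is required.
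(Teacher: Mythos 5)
Your proof is correct and follows essentially the same route as the paper: both arguments are just the contrapositive of the key lemma (applied to $X$ for part (i) and to $X^{*}$ for part (ii)), with (iii) combining the two. Your $\varepsilon$-perturbation to reconcile $B\geq X^{*}A^{-1}X$ with the lemma's hypothesis $B>0$ is a sound extra precaution that the paper silently omits; in fact the lemma's own proof never uses invertibility of $B$, only $A>0$, so either way the gap is harmless.
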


\begin{proof}
This immediately follows from the previous lemma. 
Indeed if $B\geq X^{*}A^{-1}X$ does not satisfy the 
inequality (1), then by the lemma we have 
$\alpha<\beta$. Similarly if $C\geq XA^{-1}X^{*}$ does not satisfy the 
inequality (2), then by the lemma we have 
$\alpha>\beta$. So we have shown both (i) and (ii). 
The statement (iii) is also obvious.
\end{proof}

Next we want to consider a special case in which 
$X$ is unitary. 
\begin{prop} 
For any positive invertible matrix $A$ and any unitary $U$, we have 
$$
||
A+A^{-1}
||
\leq
||A+U^{*}A^{-1}U||.
$$ 
That is, the inequality in the problem 2 is true if $X$ is unitary. 
\end{prop}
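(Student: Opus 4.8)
The plan is to prove the inequality directly, by exhibiting a unit vector on which the quadratic form of $A+U^{*}A^{-1}U$ is at least $\|A+A^{-1}\|$; this bypasses the dichotomy of Theorem 2.2, which when specialized to $X=U$ only gives $\|A+A^{-1}\|\leq\max\{\alpha,\beta\}$ with $\alpha=\|A+U^{*}A^{-1}U\|$ and $\beta=\|A+UA^{-1}U^{*}\|$, not the desired bound by $\alpha$ alone. The starting observation is elementary: $A+A^{-1}$ is positive with eigenvalues $\lambda+\lambda^{-1}$ as $\lambda$ ranges over the spectrum of $A$, and since $t\mapsto t+t^{-1}$ is convex on $(0,\infty)$ the largest of these is attained at an extreme eigenvalue of $A$, so
\[
\|A+A^{-1}\|=\max\{\,\|A\|+\|A\|^{-1},\ \|A^{-1}\|+\|A^{-1}\|^{-1}\,\}.
\]
I would then treat the two cases separately according to which term realizes this maximum.

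First, suppose $\|A+A^{-1}\|=\|A\|+\|A\|^{-1}$, and let $v$ be a unit eigenvector of $A$ for the eigenvalue $\|A\|$. Since $U$ is unitary, $Uv$ is a unit vector, hence $\langle A^{-1}Uv,Uv\rangle\geq\lambda_{\min}(A^{-1})=\|A\|^{-1}$; as $A+U^{*}A^{-1}U\geq0$ this yields
\[
\|A+U^{*}A^{-1}U\|\geq\langle(A+U^{*}A^{-1}U)v,v\rangle=\langle Av,v\rangle+\langle A^{-1}Uv,Uv\rangle\geq\|A\|+\|A\|^{-1}=\|A+A^{-1}\|.
\]
Second, suppose $\|A+A^{-1}\|=\|A^{-1}\|+\|A^{-1}\|^{-1}$; now the correct choice is a unit eigenvector $z$ of $A^{-1}$ for the eigenvalue $\|A^{-1}\|$, together with the test vector $w=U^{*}z$. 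Then $Uw=z$, so $\langle A^{-1}Uw,Uw\rangle=\|A^{-1}\|$, while $\langle Aw,w\rangle\geq\lambda_{\min}(A)=\|A^{-1}\|^{-1}$, and again
\[
\|A+U^{*}A^{-1}U\|\geq\langle(A+U^{*}A^{-1}U)w,w\rangle=\langle Aw,w\rangle+\langle A^{-1}Uw,Uw\rangle\geq\|A^{-1}\|^{-1}+\|A^{-1}\|=\|A+A^{-1}\|.
\]

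The only point requiring any thought --- indeed essentially the whole idea --- is choosing the test vector on the correct side: when $\|A+A^{-1}\|$ comes from a large eigenvalue of $A$, one tests with an eigenvector of $A$ itself; when it comes from a small eigenvalue of $A$ (equivalently, a large eigenvalue of $A^{-1}$), one must first pull the relevant eigenvector of $A^{-1}$ back through $U^{*}$. After that, each estimate is a one-line bound using only positivity of $A+U^{*}A^{-1}U$, unitarity of $U$, and $\langle Cx,x\rangle\geq\lambda_{\min}(C)\|x\|^{2}$. I do not anticipate any genuine obstacle, and in particular no appeal to the key lemma or to Theorem 2.2 is needed.
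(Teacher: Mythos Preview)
Your proof is correct and follows essentially the same approach as the paper: both begin with the observation that $\|A+A^{-1}\|=\max\{\|A\|+\|A\|^{-1},\ \lambda_{\min}+\lambda_{\min}^{-1}\}$ and then use the bound $A^{-1}\geq\|A\|^{-1}I$ (equivalently your Rayleigh-quotient estimate) to finish. The only cosmetic difference is that the paper collapses your two cases into one via the substitution $A\mapsto A^{-1}$, $U\mapsto U^{*}$ and argues at the operator level, whereas you treat the cases separately with explicit test vectors.
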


\begin{proof}
Let $\lambda_{min}$ 
be the minimal eigenvalue of $A$. 
Consider the function 
$
f(t)=t+t^{-1}
$. Then since 
$
f'(t)=\dfrac{t^{2}-1}{t^{2}}
$, the maximum of $f(t)$ on the interval 
$
0<a\leq t\leq b
$ is given by 
$
\max\{a+a^{-1},\ \ b+b^{-1}\}
$. 
Therefore we have 
$$
||
A+A^{-1}
||=
\max\{\lambda_{min}+\lambda_{min}^{-1}
,\ \ ||A||+||A||^{-1}
\}.
$$ 
We may assume that 
$$
||
A+A^{-1}
||=
||A||+||A||^{-1}.
$$ 
Indeed, 
by setting $B=A^{-1}$, we see that 
$
||
A+A^{-1}
||=||B+B^{-1}||
$ and 
$
||A+U^{*}A^{-1}U||=||B+UB^{-1}U^{*}||
$. Moreover the spectrum of $B$ is located 
in the interval $||A||^{-1}\leq t\leq \lambda_{min}^{-1}
=||B||
$. 
Therefore if 
$
||
A+A^{-1}
||=
\lambda_{min}+\lambda_{min}^{-1}
$, then we have 
$$
||B+B^{-1}||=||
A+A^{-1}
||=
\lambda_{min}+\lambda_{min}^{-1}
=||B||+||B||^{-1}.
$$

Now we have only to show 
$$
||A||+||A||^{-1}\leq 
||A+U^{*}A^{-1}U||.
$$
Since $A\leq ||A||$, 
we have 
$
U^{*}A^{-1}U\geq ||A||^{-1}
$. Thus we get 
$$
||A+U^{*}A^{-1}U||\geq ||A+||A||^{-1}||
=||A||+||A||^{-1}.
$$
\end{proof}

In the case that $X$ is a unitary $U$, we can rewite the problem 1 
as follows. 

\noindent{\bf Problem 3.} 
For any $A>0$, any $C\geq A^{-1}$ and any unitary $U$, 
under what condition 
can we conclude  
$$\Bigl\|
\begin{bmatrix}
A&1\\
1&C
\end{bmatrix}
\Bigr\|\leq 
||
A+U^{*}CU
||?
$$

Indeed if $X$ is a unitary $U$ in problem 1, 
we see that 
$$
\Bigl\|
\begin{bmatrix}
A&U\\
U^{*}&B
\end{bmatrix}
\Bigr\|=
\Bigl\|
\begin{bmatrix}
1&0\\
0&U^{*}
\end{bmatrix}
\begin{bmatrix}
A&1\\
1&UBU^{*}
\end{bmatrix}
\begin{bmatrix}
1&0\\
0&U
\end{bmatrix}
\Bigr\|
=\Bigl\|
\begin{bmatrix}
A&1\\
1&UBU^{*}
\end{bmatrix}
\Bigr\|
$$
and 
$
||A+B||=||A+U^{*}(UBU^{*})U||
$. Thus by letting $C=UBU^{*}$ we obtain the problem 3. 

In the previous proposition we have shown 
that the inequality in the problem 3 is true in the case 
$C=A^{-1}$. 
In the same way 
we can also show that the inequality in the problem 3 is true in the case 
$C=\alpha A^{-1}$ for any scalar $\alpha\geq1$. These facts 
might suggest that the inequality in the problem 3 is true when $AC=CA$. 
However we can construct a counter example as follows.

Set 
$$
A=
\begin{bmatrix}
1&0&0\\
0&2&0\\
0&0&3
\end{bmatrix}
,\ 
C=
\begin{bmatrix}
1&0&0\\
0&\frac{1}{2}&0\\
0&0&2
\end{bmatrix}
,\ 
U=
\begin{bmatrix}
0&1&0\\
0&0&1\\
1&0&0
\end{bmatrix}.
$$
Here we remark that 
$$
A^{-1}=\begin{bmatrix}
1&0&0\\
0&\frac{1}{2}&0\\
0&0&\frac{1}{3}
\end{bmatrix}
\leq C.
$$
We observe 
$$
\|A+U^{*}CU\|=
\Bigl\|
\begin{bmatrix}
1&0&0\\
0&2&0\\
0&0&3
\end{bmatrix}
+
\begin{bmatrix}
2&0&0\\
0&1&0\\
0&0&\frac{1}{2}
\end{bmatrix}
\Bigr\|
=\frac{7}{2}
$$
Next we compute the norm 
$
\Bigl\|
\begin{bmatrix}
A&I\\
I&C
\end{bmatrix}
\Bigr\|
$. 
We observe that 
$$
\Bigl\|
\begin{bmatrix}
a&1\\
1&c
\end{bmatrix}
\Bigr\|
=
\dfrac{a+c+\sqrt{(a-c)^{2}+4}}{2}
$$ 
for any positive numbers $a$ and $c$. 
Then we see that 
$$
\Bigl\|
\begin{bmatrix}
A&1\\
1&C
\end{bmatrix}
\Bigr\|\geq 
\Bigl\|
\begin{bmatrix}
3&1\\
1&2
\end{bmatrix}
\Bigr\|=
\dfrac{5+\sqrt{5}}{2}>\frac{7}{2}=\|A+U^{*}CU\|.
$$

\ \\

Recall the following theorem due to Ando. 
\begin{thm}[Ando \cite{A}]  
The matrix $B$ is fixed.\\ 
If the implication 
$$
\begin{bmatrix}
A&B\\
B^{*}&C
\end{bmatrix}\geq0
\Longrightarrow  
||A\sharp C||\geq||B||
$$ 
is true 
for any $A\geq0,C\geq0$, then we have 
$
||B||=r(B)
$, where $r(B)$ is the spectral radius of $B$. 
($B$ is normaloid.) 
\end{thm}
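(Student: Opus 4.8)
The plan is to prove the contrapositive: if $B$ is \emph{not} normaloid, i.e.\ $r(B)<\|B\|$, then the implication in the theorem fails, meaning there are matrices $A\geq0$, $C\geq0$ with $\begin{bmatrix}A&B\\B^{*}&C\end{bmatrix}\geq0$ but $\|A\sharp C\|<\|B\|$. First I would put $B$ in a convenient form. Conjugating the block matrix by $\mathrm{diag}(U,U)$ with $U$ unitary preserves positivity and the operator norm, and $(U^{*}AU)\sharp(U^{*}CU)=U^{*}(A\sharp C)U$, while $\|U^{*}BU\|=\|B\|$ and $r(U^{*}BU)=r(B)$; so it is enough to produce the desired $A,C$ after replacing $B$ by $U^{*}BU$. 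By Schur's triangularization theorem I may therefore assume $B=T$ is upper triangular, with diagonal entries the eigenvalues $\lambda_{1},\dots,\lambda_{n}$ of $B$, so $|\lambda_{i}|\leq r(B)<\|B\|=\|T\|$ for all $i$.

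Now, for $\varepsilon>0$ set $D_{\varepsilon}=\mathrm{diag}(\varepsilon,\varepsilon^{2},\dots,\varepsilon^{n})$ and $M_{\varepsilon}=D_{\varepsilon}^{-1}TD_{\varepsilon}$; the $(i,j)$ entry of $M_{\varepsilon}$ is $\varepsilon^{\,j-i}T_{ij}$, so as $\varepsilon\to0^{+}$ the strictly upper triangular part dies and $M_{\varepsilon}\to\mathrm{diag}(\lambda_{1},\dots,\lambda_{n})$, whence $\|M_{\varepsilon}\|\to r(B)$. Fix $\varepsilon$ small enough that $m:=\|M_{\varepsilon}\|<\|B\|$ (note $m>0$ since $T\neq0$), put $R=D_{\varepsilon}^{2}>0$, and take
$$
A=mR,\qquad C=mR^{-1}.
$$
Using the Schur complement criterion $\begin{bmatrix}A&B\\B^{*}&C\end{bmatrix}\geq0\iff C\geq B^{*}A^{-1}B$ (the fact already recalled for Problem 1), positivity of the block matrix follows from
$$
C-B^{*}A^{-1}B=mD_{\varepsilon}^{-2}-\tfrac{1}{m}T^{*}D_{\varepsilon}^{-2}T=\tfrac{1}{m}\,D_{\varepsilon}^{-1}\bigl(m^{2}I-M_{\varepsilon}^{*}M_{\varepsilon}\bigr)D_{\varepsilon}^{-1}\geq0,
$$
since $m^{2}I=\|M_{\varepsilon}\|^{2}I\geq M_{\varepsilon}^{*}M_{\varepsilon}$. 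Finally, because $R$ and $R^{-1}$ commute, $A\sharp C=m\,(R\sharp R^{-1})=m\,(RR^{-1})^{1/2}=mI$, so $\|A\sharp C\|=m<\|B\|$, contradicting the hypothesis. Since $r(B)\leq\|B\|$ always holds, this forces $\|B\|=r(B)$.

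The one point that takes a moment of thought — and where I expect the real idea to sit — is the choice $A\propto R$, $C\propto R^{-1}$: it splits a similarity transform of $B$ across the two diagonal blocks precisely so that the geometric mean, being the symmetric average of $A$ and $C$, collapses to a scalar multiple of $I$, while the diagonal scaling of the Schur form drives $\|S^{-1}BS\|$ down toward $r(B)$. Abstractly this is the standard identity $r(B)=\inf\{\|S^{-1}BS\|:S\text{ invertible}\}$ together with the polar decomposition of $S$ (which lets the similarity be implemented by a \emph{positive} matrix), but the explicit triangular version above is cleaner to write out. Everything else — unitary covariance of $\sharp$, the Schur complement identity, and $R\sharp R^{-1}=I$ — is routine.
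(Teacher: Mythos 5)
Your argument is correct and complete. Note, however, that the paper does not prove this statement at all: it is quoted verbatim from Ando's article \cite{A} purely as motivation for Problem 4, so there is no in-paper proof to compare against. Your contrapositive argument is the natural one and, as far as I can tell, matches the idea behind Ando's original proof: the choice $A=mR$, $C=mR^{-1}$ with $R>0$ forces $A\sharp C=mI$ by homogeneity and commutativity of the geometric mean, the Schur-complement condition $C\geq B^{*}A^{-1}B$ becomes exactly $\Vert R^{-1/2}BR^{1/2}\Vert\leq m$, and the identity $r(B)=\inf\{\Vert S^{-1}BS\Vert\}$ (realized concretely by your diagonal scaling of the Schur triangular form, which also justifies restricting to positive $S$) drives $m$ down to any value above $r(B)$. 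All the individual steps check out: the unitary covariance reduction, the computation $D_{\varepsilon}^{-1}M_{\varepsilon}^{*}M_{\varepsilon}D_{\varepsilon}^{-1}=T^{*}D_{\varepsilon}^{-2}T$, the positivity of the congruence, and $\Vert A\sharp C\Vert=m<\Vert B\Vert$. The only cosmetic remark is that the degenerate case $B=0$ is vacuous since then $r(B)=\Vert B\Vert$ automatically, which you implicitly handle by noting $m>0$ when $T\neq0$.
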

Inspired by this theorem, we want to ask the following. 

\noindent{\bf Problem 4.} If 
the inequality in the problem 1 is true for any $A>0$ and 
$B\geq X^{*}A^{-1}X$, 
what can we say about $X$? 
Can we conclude that $X$ is normal?

Next we will make some observation for the problem 4. 
We can rewrite the problem 4 as follows. 

\noindent{\bf Problem 5.}
Let $D$ be positive and let $U$ be unitary. 
If 
$\Bigl\|
\begin{bmatrix}
A&D\\
D&C
\end{bmatrix}
\Bigr\|\leq ||A+U^{*}CU||
$ 
for any $A>0$ and $C\geq DA^{-1}D$, can we conclude 
$UD=DU$? 

Indeed, take a polar decomposition $X=DU$ and 
set $C=UBU^{*}$. Then we see that 
$$\Bigl\|
\begin{bmatrix}
A&X\\
X^{*}&B
\end{bmatrix}
\Bigr\|=
\Bigl\|
\begin{bmatrix}
A&DU\\
U^{*}D&U^{*}CU
\end{bmatrix}
\Bigr\|
=
\Bigl\|
\begin{bmatrix}
A&D\\
D&C
\end{bmatrix}
\Bigr\|
$$ 
and 
$
||A+B||=||A+U^{*}CU||
$. On the other hand we observe that 
the inequality 
$
B\geq X^{*}A^{-1}X=U^{*}DA^{-1}DU
$ is equivalent to 
$
C=UBU^{*}\geq DA^{-1}D
$. Therefore 
we conclude that 
the problem 4 is equivalent to the problem 5. 
Here we remark that $D=(XX^{*})^{\frac{1}{2}}$.  

\begin{lem}
Under the assumption in the problem 5, 
we have 
$$
||D+U^{*}DU||=2||D||.
$$
\end{lem}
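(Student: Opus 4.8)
The plan is to obtain the claimed identity from the two one-sided estimates $\|D+U^{*}DU\|\leq 2\|D\|$ and $\|D+U^{*}DU\|\geq 2\|D\|$. The first is trivial: by the triangle inequality and unitary invariance of the operator norm, $\|D+U^{*}DU\|\leq\|D\|+\|U^{*}DU\|=2\|D\|$. All the content lies in the reverse inequality, and the idea is to feed the hypothesis of Problem 5 a well-chosen test pair $(A,C)$ that forces $2\|D\|$ to appear on the left-hand side.

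First I would take $A=D+\varepsilon I$ with $\varepsilon>0$ small (this is positive and invertible, even when $D$ is singular) and $C=D$. To see that this pair is admissible, note the algebraic identity $D(D+\varepsilon I)^{-1}D=D-\varepsilon D(D+\varepsilon I)^{-1}$; since $D$ and $(D+\varepsilon I)^{-1}$ commute and are positive semidefinite, the subtracted term is positive semidefinite, so $DA^{-1}D=D-\varepsilon D(D+\varepsilon I)^{-1}\leq D=C$. Hence the hypothesis of Problem 5 applies and gives
$$\Bigl\|\begin{bmatrix}D+\varepsilon I&D\\ D&D\end{bmatrix}\Bigr\|\leq\|D+\varepsilon I+U^{*}DU\|.$$

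Next I would estimate the left-hand side from below. The matrix $\begin{bmatrix}D&D\\ D&D\end{bmatrix}$ factors as $\begin{bmatrix}D^{1/2}\\ D^{1/2}\end{bmatrix}\begin{bmatrix}D^{1/2}&D^{1/2}\end{bmatrix}$, so it is positive semidefinite and its norm equals that of $\begin{bmatrix}D^{1/2}&D^{1/2}\end{bmatrix}\begin{bmatrix}D^{1/2}\\ D^{1/2}\end{bmatrix}=2D$, namely $2\|D\|$; equivalently, if $v$ is a unit eigenvector of $D$ for the eigenvalue $\|D\|$ then $\begin{bmatrix}v\\ v\end{bmatrix}$ is an eigenvector of $\begin{bmatrix}D&D\\ D&D\end{bmatrix}$ for $2\|D\|$. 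Since $\begin{bmatrix}D+\varepsilon I&D\\ D&D\end{bmatrix}=\begin{bmatrix}D&D\\ D&D\end{bmatrix}+\begin{bmatrix}\varepsilon I&0\\ 0&0\end{bmatrix}\geq\begin{bmatrix}D&D\\ D&D\end{bmatrix}\geq0$, monotonicity of the norm on positive semidefinite matrices gives $\bigl\|\begin{bmatrix}D+\varepsilon I&D\\ D&D\end{bmatrix}\bigr\|\geq 2\|D\|$. Combined with the displayed inequality, $\|D+\varepsilon I+U^{*}DU\|\geq 2\|D\|$ for every $\varepsilon>0$, and letting $\varepsilon\to 0$ yields $\|D+U^{*}DU\|\geq 2\|D\|$, which together with the upper bound proves the lemma.

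The argument is entirely elementary, so I do not expect a substantive obstacle; the only point that needs a little care is that $D$ may fail to be invertible (indeed $D=(XX^{*})^{1/2}$), which is why I do not simply take $A=C=D$ but rather perturb to $A=D+\varepsilon I$ and conclude by continuity of the norm.
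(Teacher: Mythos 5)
Your proof is correct and follows essentially the same route as the paper, which simply sets $A=C=D$ (so that $DA^{-1}D=D$) and combines the resulting lower bound $2\|D\|$ with the triangle-inequality upper bound. Your $\varepsilon$-perturbation $A=D+\varepsilon I$ is a worthwhile refinement rather than a different method: the paper's choice $A=D$ tacitly assumes $D=(XX^{*})^{1/2}$ is invertible, which need not hold, so your regularization and passage to the limit actually close a small gap in the paper's own argument.
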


\begin{proof}
Set $A=C=D$. Here we remark that 
$
C=D=DA^{-1}D
$. By the assumption we have 
$$
\Bigl\|
\begin{bmatrix}
D&D\\
D&D
\end{bmatrix}
\Bigr\|\leq ||D+U^{*}DU||.
$$
Then since 
$
\Bigl\|
\begin{bmatrix}
D&D\\
D&D
\end{bmatrix}
\Bigr\|
=2||D||
$, we see that 
$$
2||D||=
\Bigl\|
\begin{bmatrix}
D&D\\
D&D
\end{bmatrix}
\Bigr\|\leq ||D+U^{*}DU||
\leq2||D||.
$$
So we are done. 
\end{proof}

\begin{lem}
Under the assumption in the problem 5, 
we can find a unit vector $\xi$ satisfying both 
$
D\xi=||D||\xi
$ and 
$
DU\xi=||D||U\xi
$. 
\end{lem}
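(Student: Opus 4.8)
The plan is to combine the preceding lemma with the standard equality analysis for the quadratic form of a positive operator. From the previous lemma we know $\|D+U^{*}DU\|=2\|D\|$. Since $D+U^{*}DU$ is positive semidefinite, its operator norm equals $\max_{\|\eta\|=1}\langle (D+U^{*}DU)\eta,\eta\rangle$, and because we work in finite dimensions this maximum is attained; call a maximizing unit vector $\xi$. So $\langle D\xi,\xi\rangle+\langle U^{*}DU\xi,\xi\rangle=2\|D\|$.

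Next I would use the elementary bounds $\langle D\xi,\xi\rangle\le\|D\|$ (as $D\le\|D\|$ and $\|\xi\|=1$) and $\langle DU\xi,U\xi\rangle\le\|D\|$ (since $U$ is unitary, $\|U\xi\|=1$). Adding these gives $\langle D\xi,\xi\rangle+\langle DU\xi,U\xi\rangle\le2\|D\|$ with equality, so each of the two terms must individually equal $\|D\|$, i.e. $\langle D\xi,\xi\rangle=\|D\|$ and $\langle DU\xi,U\xi\rangle=\|D\|$.

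Finally I would invoke the equality case: if $T\ge0$ and $\langle T\eta,\eta\rangle=0$, then $\|T^{1/2}\eta\|^{2}=0$, hence $T\eta=0$. Applying this with $T=\|D\|I-D\ge0$ and $\eta=\xi$ gives $D\xi=\|D\|\xi$; applying it with $T=\|D\|I-D$ and $\eta=U\xi$ (a unit vector) gives $DU\xi=\|D\|U\xi$. Thus the single unit vector $\xi$ has both required properties.

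There is essentially no serious obstacle here; the only points to state carefully are that the quadratic-form maximum of the positive matrix $D+U^{*}DU$ is attained and equals the norm, that $U\xi$ is again a unit vector so the bound $\langle DU\xi,U\xi\rangle\le\|D\|$ applies, and the standard fact that a positive matrix annihilates any vector on which its quadratic form vanishes. The mild subtlety worth flagging is that one must extract the \emph{same} $\xi$ for both conditions, which is exactly why one starts from a maximizer of $D+U^{*}DU$ rather than treating $D$ and $U^{*}DU$ separately.
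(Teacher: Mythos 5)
Your proof is correct and takes essentially the same approach as the paper: the paper picks an eigenvector $\xi$ of $D+U^{*}DU$ for the eigenvalue $2\Vert D\Vert$ and applies the triangle inequality to $\Vert D\xi\Vert+\Vert U^{*}DU\xi\Vert$, while you maximize the quadratic form and split $\langle D\xi,\xi\rangle+\langle DU\xi,U\xi\rangle$. Both arguments then invoke the same equality case for positive matrices to conclude that the single vector $\xi$ is a norming eigenvector for both $D$ and $U^{*}DU$.
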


\begin{proof}
By the previous lemma, we can take a unit vector $\xi$ such that 
$$
(D+U^{*}DU)\xi=2||D||\xi. 
$$
Then since 
$$2||D||=
||(D+U^{*}DU)\xi||\leq ||D\xi||+||U^{*}DU\xi||\leq 2||D||,
$$ 
we have both 
$||D\xi||=||D||$ and $||U^{*}DU\xi||=||D||$. 
Since 
$$
||(||D||^{2}-D^{2})^{\frac{1}{2}}\xi||^{2}=
\langle
(||D||^{2}-D^{2})\xi,\xi
\rangle=
||D||^{2}-||D\xi||^{2}=0,
$$ 
we have $D\xi=||D||\xi$. Similarly we get 
$
U^{*}DU\xi=||D||\xi
$. 
\end{proof}

We have the partial answer to the problem 5 as follows. 

\begin{thm}
Under the assumption in the problem 5, if the $n\times n$-matrix $D$ 
has $n$ distinct eigenvalues, then we have $UD=DU$. 
That is, the problem 5 is true in this case. 
\end{thm}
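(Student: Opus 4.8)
The plan is to argue by induction on the dimension $n$; the case $n=1$ is trivial, so assume $n\geq2$ and that the statement holds in dimension $n-1$. The first step is to extract a single common eigenvector. By the previous lemma there is a unit vector $\xi$ with $D\xi=\|D\|\xi$ and $DU\xi=\|D\|U\xi$. Since the eigenvalues of $D$ are distinct, $\|D\|$ is a \emph{simple} eigenvalue: the first relation forces $\xi$ to span the top eigenline of $D$, while the second says $U\xi$ lies in that same eigenline, so $U\xi=\mu\xi$ with $|\mu|=1$. Hence $\xi^{\perp}$ is invariant under both $D$ and $U$; writing $D_{1}=D|_{\xi^{\perp}}$ and $U_{1}=U|_{\xi^{\perp}}$, we get a positive $D_{1}$ with $n-1$ distinct eigenvalues and a unitary $U_{1}$ on $\xi^{\perp}\cong{\Bbb C}^{n-1}$. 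It suffices to check that $(D_{1},U_{1})$ again satisfies the assumption of Problem 5, for then the induction hypothesis gives $U_{1}D_{1}=D_{1}U_{1}$, which together with $D\xi=\|D\|\xi$ and $U\xi=\mu\xi$ yields $UD=DU$.

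To verify the assumption of Problem 5 for $(D_{1},U_{1})$, fix $\tilde A>0$ and $\tilde C\geq D_{1}\tilde A^{-1}D_{1}$ on $\xi^{\perp}$; we must show $\bigl\|\bigl[\begin{smallmatrix}\tilde A&D_{1}\\ D_{1}&\tilde C\end{smallmatrix}\bigr]\bigr\|\leq\|\tilde A+U_{1}^{*}\tilde CU_{1}\|$. Extend to ${\Bbb C}^{n}={\Bbb C}\xi\oplus\xi^{\perp}$ by $A=\|D\|\,\xi\xi^{*}\oplus\tilde A$ and $C=\|D\|\,\xi\xi^{*}\oplus\tilde C$. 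Since $D=\|D\|\,\xi\xi^{*}\oplus D_{1}$ is block diagonal, $DA^{-1}D=\|D\|\,\xi\xi^{*}\oplus D_{1}\tilde A^{-1}D_{1}$, so the condition $C\geq DA^{-1}D$ is exactly $\tilde C\geq D_{1}\tilde A^{-1}D_{1}$ and $(A,C)$ is admissible. Because $U\xi=\mu\xi$, the $\xi$–block of $U^{*}CU$ equals $\|D\|$, so $A+U^{*}CU=2\|D\|\,\xi\xi^{*}\oplus(\tilde A+U_{1}^{*}\tilde CU_{1})$, while $\bigl[\begin{smallmatrix}A&D\\ D&C\end{smallmatrix}\bigr]$ is the direct sum of $\bigl[\begin{smallmatrix}\|D\|&\|D\|\\ \|D\|&\|D\|\end{smallmatrix}\bigr]$ and $\bigl[\begin{smallmatrix}\tilde A&D_{1}\\ D_{1}&\tilde C\end{smallmatrix}\bigr]$. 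Using $\bigl\|\bigl[\begin{smallmatrix}\|D\|&\|D\|\\ \|D\|&\|D\|\end{smallmatrix}\bigr]\bigr\|=2\|D\|$, the assumption of Problem 5 for $(A,C)$ reads
$$
\max\Bigl\{2\|D\|,\ \bigl\|\bigl[\begin{smallmatrix}\tilde A&D_{1}\\ D_{1}&\tilde C\end{smallmatrix}\bigr]\bigr\|\Bigr\}\ \leq\ \max\bigl\{2\|D\|,\ \|\tilde A+U_{1}^{*}\tilde CU_{1}\|\bigr\},
$$
which already delivers the desired inequality whenever $\bigl\|\bigl[\begin{smallmatrix}\tilde A&D_{1}\\ D_{1}&\tilde C\end{smallmatrix}\bigr]\bigr\|>2\|D\|$.

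The remaining case $\bigl\|\bigl[\begin{smallmatrix}\tilde A&D_{1}\\ D_{1}&\tilde C\end{smallmatrix}\bigr]\bigr\|\leq2\|D\|$ is the crux, and I expect it to be the main obstacle: no block-diagonal extension of the above kind can see such a failure, since for all $a,c>0$ with $ac\geq\|D\|^{2}$ one has $2\|D\|\leq\bigl\|\bigl[\begin{smallmatrix}a&\|D\|\\ \|D\|&c\end{smallmatrix}\bigr]\bigr\|\leq a+c$, so the extra $2\times2$ block contributes $2\|D\|$ to both sides. I would attack it by contradiction: if the restricted inequality failed for some admissible $(\tilde A,\tilde C)$, then (with $X_{1}=D_{1}U_{1}$, conjugating the lower block by $U_{1}$, noting $U_{1}^{*}\tilde CU_{1}\geq X_{1}^{*}\tilde A^{-1}X_{1}$, and applying the key lemma on $\xi^{\perp}$) one gets the strict inequality $\|\tilde A+D_{1}U_{1}\tilde A^{-1}U_{1}^{*}D_{1}\|>\|\tilde A+U_{1}^{*}D_{1}\tilde A^{-1}D_{1}U_{1}\|$, and the task becomes to amplify this strictness — by scaling $\tilde A$, or by a further enlargement of the space — into an admissible pair whose block norm exceeds $2\|D\|$, contradicting the case already treated. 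An alternative, more concrete route is to feed the assumption of Problem 5 auxiliary matrices $M=D+m\,\eta\eta^{*}$ (so $M\geq D$) with $m\to\infty$ and $\eta$ a well-chosen unit vector in $\xi^{\perp}$ satisfying $\langle U\eta,\eta\rangle=0$, expanding $\|M+D\|$ and $\|M+U^{*}MU\|=\|M+UMU^{*}\|$ to first order in $1/m$ to obtain an inequality of the shape $2\langle D\eta,\eta\rangle\leq\lambda_{\max}\bigl((D+UDU^{*})|_{\text{span}\{\eta,U\eta\}}\bigr)$; here distinctness of the eigenvalues of $D$ is precisely what forces the extremal vector to be the relevant top eigenvector, pinning down $U\eta$ and excluding "tilted'' images $U\xi_{k}=c\xi_{k}+s\zeta$ with $c,s\neq0$. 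This last point is where I expect the distinct-eigenvalue hypothesis to be used in an essential way.
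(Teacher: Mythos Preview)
Your extraction of the first common eigenvector is correct and matches the paper exactly. The gap is precisely where you locate it: to run the induction you must verify the Problem~5 hypothesis for $(D_{1},U_{1})$ on $\xi^{\perp}$, and your block-diagonal extension only handles the case $\bigl\|\bigl[\begin{smallmatrix}\tilde A&D_{1}\\ D_{1}&\tilde C\end{smallmatrix}\bigr]\bigr\|>2\|D\|$. Neither of the two attacks you sketch for the remaining case is carried out; in particular, ``amplifying'' the strict inequality $\|\tilde A+D_{1}U_{1}\tilde A^{-1}U_{1}^{*}D_{1}\|>\|\tilde A+U_{1}^{*}D_{1}\tilde A^{-1}D_{1}U_{1}\|$ by scaling $\tilde A$ does not, on its own, push the block norm above the fixed threshold $2\|D\|$ while preserving a violation. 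As written the proof is incomplete.

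The paper sidesteps this obstacle by \emph{not} attempting to transfer the Problem~5 hypothesis to the subspace. All that is actually needed to iterate the eigenvector-extraction argument is the single equality $\|Dp+U^{*}DpU\|=2\|Dp\|$, where $p=1-q$ and $q$ is the projection onto the eigenspaces already handled (so $q$ commutes with both $D$ and $U$). The paper obtains this by applying the full-space hypothesis to the specific choice $A=kDp+q$, $C=DA^{-1}D$: both $A+A^{-1/2}D^{2}A^{-1/2}$ and $A+U^{*}DA^{-1}DU$ split along $p\oplus q$, and for $k$ large the $q$-parts are dominated, yielding $(k+k^{-1})\|Dp\|\le\|kDp+k^{-1}U^{*}DpU\|$. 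This is an equality by the triangle inequality, and a Daugavet-type lemma ($\|A+B\|=\|A\|+\|B\|$ for positive $A,B$ implies $\|\alpha A+\beta B\|=\alpha\|A\|+\beta\|B\|$) then gives $\|Dp+U^{*}DpU\|=2\|Dp\|$. So the missing idea in your approach is: stay on the full space and choose $A$ so that the already-handled block is damped out, rather than restricting and trying to re-verify the hypothesis there.
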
 
Here recall that $D=(XX^{*})^{\frac{1}{2}}$ and that 
the prolem 4 is equivalent to the problem 5. These mean that 
the problem 4 is true if $(XX^{*})^{\frac{1}{2}}$ has $n$ distinct eigenvalues. 

For the proof we need some preparation. 

\begin{lem}{\cite[Lemma 2.1]{AAB}}
For two positive operators $A$ and $B$, if they satisfy 
$
||A+B||=||A||+||B||
$, then we have $
||\alpha A+\beta B||=\alpha||A||+\beta||B||
$ for any $\alpha\geq0$ and $\beta\geq0$. 
\end{lem}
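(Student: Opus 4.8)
The plan is to extract from the hypothesis $\|A+B\|=\|A\|+\|B\|$ a single unit vector that is simultaneously a top eigenvector of both $A$ and $B$, and then to read off the conclusion directly by one eigenvalue estimate combined with the triangle inequality.

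First I would locate the common eigenvector. Since $A+B$ is positive, its operator norm equals its largest eigenvalue, so there is a unit vector $\xi$ with $(A+B)\xi=\|A+B\|\xi$, whence $\langle (A+B)\xi,\xi\rangle=\|A+B\|=\|A\|+\|B\|$. Because $A$ and $B$ are positive and $\xi$ is a unit vector, we have $\langle A\xi,\xi\rangle\le\|A\|$ and $\langle B\xi,\xi\rangle\le\|B\|$. Summing these two inequalities and comparing with the preceding display forces both to be equalities, so $\langle A\xi,\xi\rangle=\|A\|$ and $\langle B\xi,\xi\rangle=\|B\|$.

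Next I would promote these scalar identities to eigenvalue equations using the same positivity device that appears in the proof of the preceding lemma. Since $\|A\|-A\ge0$, the computation
$$\|(\|A\|-A)^{\frac{1}{2}}\xi\|^{2}=\langle(\|A\|-A)\xi,\xi\rangle=\|A\|-\langle A\xi,\xi\rangle=0$$
yields $(\|A\|-A)\xi=0$, i.e. $A\xi=\|A\|\xi$; in the same way $B\xi=\|B\|\xi$. Thus $\xi$ is a common top eigenvector of $A$ and $B$.

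Finally, for any $\alpha\ge0$ and $\beta\ge0$ we obtain $(\alpha A+\beta B)\xi=(\alpha\|A\|+\beta\|B\|)\xi$, so the nonnegative number $\alpha\|A\|+\beta\|B\|$ is an eigenvalue of the positive operator $\alpha A+\beta B$, giving $\|\alpha A+\beta B\|\ge\alpha\|A\|+\beta\|B\|$. The reverse inequality is merely the triangle inequality $\|\alpha A+\beta B\|\le\|\alpha A\|+\|\beta B\|=\alpha\|A\|+\beta\|B\|$, and combining the two directions yields the claimed equality. There is no genuine obstacle in this argument; the only step deserving care is the passage from $\langle A\xi,\xi\rangle=\|A\|$ to $A\xi=\|A\|\xi$, which relies essentially on the positivity of $A$ and is exactly the reasoning already available from the earlier lemma.
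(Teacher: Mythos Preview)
Your argument is correct. It differs from the paper's proof, which is a two-line norm computation: assuming $\alpha\ge\beta\ge0$, one writes
\[
\|\alpha A+\beta B\|=\|\alpha(A+B)-(\alpha-\beta)B\|\ge\alpha\|A+B\|-(\alpha-\beta)\|B\|=\alpha\|A\|+\beta\|B\|,
\]
with the other direction by the triangle inequality. That route never touches a vector and in fact uses neither positivity nor the Hilbert-space structure; it is a pure normed-space argument (which is natural given the Banach-space origin of the cited lemma). Your approach instead extracts a common top eigenvector of $A$ and $B$; this is more hands-on and yields the extra structural information that such a vector exists, but it relies on the norm being attained as an eigenvalue (fine here for matrices) and on positivity to pass from $\langle A\xi,\xi\rangle=\|A\|$ to $A\xi=\|A\|\xi$. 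Both are legitimate; the paper's trick is shorter and more portable, while yours explains the equality geometrically.
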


\begin{proof}
We would like to include its proof for completeness. Without loss of generality 
we may assume that 
$
\alpha\geq\beta\geq0
$. We see that 
\begin{align*}
||\alpha A+\beta B||&=
||\alpha(A+B)-(\alpha-\beta)B||\geq
\alpha||A+B||-(\alpha-\beta)||B||\\
&=\alpha(||A||+||B||)-(\alpha-\beta)||B||
=\alpha||A||+\beta||B||.
\end{align*}
The reverse inequality follows from the triangle inequality. 
\end{proof}

\begin{lem}
Consider the matrices as in the problem 5. 
Let $q$ be a projection with 
$Dq=qD$ and $Uq=qU$ and we set $p=1-q$. 
Then we have 
$$
||Dp+U^{*}DpU||=2||Dp||.
$$
\end{lem}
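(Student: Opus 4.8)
The plan is to apply the orthogonal decomposition induced by $q$ to reduce the global norm equality $\|D + U^*DU\| = 2\|D\|$ from Lemma 2.9 to the ``off-$q$'' piece. Since $Dq = qD$ and $Uq = qU$, both $D$ and $U$ commute with $q$, hence with $p = 1-q$ as well; therefore $D$ and $U^*DU$ are both block-diagonal with respect to the decomposition $\mathbb{C}^n = q\mathbb{C}^n \oplus p\mathbb{C}^n$. Consequently
$$
D + U^*DU = (Dq + qU^*DUq) \oplus (Dp + pU^*DUp),
$$
and the operator norm of a block-diagonal operator is the maximum of the norms of the blocks, so
$$
2\|D\| = \|D + U^*DU\| = \max\bigl\{\, \|Dq + U^*DqU\|\,,\ \|Dp + U^*DpU\|\,\bigr\},
$$
where I have used $qU^*DUq = U^*DqU$ and similarly for $p$ (again because $U$ commutes with $q$ and $p$). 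By the triangle inequality $\|Dp + U^*DpU\| \le 2\|Dp\| \le 2\|D\|$, and likewise for the $q$-block; so it suffices to rule out the possibility that the maximum above is attained only at the $q$-block while the $p$-block is strictly smaller.

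The key step is to locate the maximizing unit vector $\xi$ from Lemma 2.9 (with $(D+U^*DU)\xi = 2\|D\|\xi$, $D\xi = \|D\|\xi$, $U^*DU\xi = \|D\|\xi$) inside the range of $p$. First I would observe that the eigenvector $\xi$ can be chosen to lie in one of the two reducing subspaces $q\mathbb{C}^n$ or $p\mathbb{C}^n$, since $D + U^*DU$ respects that decomposition — concretely, either $q\xi$ or $p\xi$ is again a unit eigenvector for the eigenvalue $2\|D\|$. If it lies in $p\mathbb{C}^n$ we are immediately done: then $Dp$ has a unit vector on which it acts as $\|D\|$ and on which $U^*DpU = U^*DUp$ also acts as $\|D\|$, forcing $\|Dp\| = \|D\|$ and $\|Dp + U^*DpU\| = 2\|D\| = 2\|Dp\|$. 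So the only real case to handle is when every such eigenvector lies in $q\mathbb{C}^n$, i.e.\ $\|Dq + U^*DqU\| = 2\|D\|$ strictly exceeds $\|Dp + U^*DpU\|$.

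To eliminate that case I would go back to the hypothesis of Problem 5 and feed it a decomposable test pair adapted to $q$. The point is that $A = C = D$ was used in Lemma 2.9 only to produce $\|D + U^*DU\| = 2\|D\|$; but because $q$ reduces $D$, $U$ and every matrix built from them, one can instead take $A = C = Dq + \mu\, p$ for a suitable scalar $\mu>0$ — or more simply argue that $Dp$ itself, as a positive matrix on the reducing subspace $p\mathbb{C}^n$ with $U|_{p\mathbb{C}^n}$ unitary there, inherits the ``Problem 5'' hypothesis restricted to $p\mathbb{C}^n$; then Lemma 2.9 applied on that subspace gives exactly $\|Dp + U^*DpU\| = 2\|Dp\|$. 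Here is where one must be careful: the restriction of the Problem 5 hypothesis to $p\mathbb{C}^n$ needs the block matrix $\begin{bmatrix} A' & Dp \\ Dp & C' \end{bmatrix}$ (with $A', C'$ acting on $p\mathbb{C}^n$, $C' \ge Dp(A')^{-1}Dp$) to dilate back to an admissible block matrix on the full space of the original Problem 5 — which it does by padding with $q$-blocks $A = A' + q$, $C = C' + q$ (so $C \ge DA^{-1}D$ on all of $\mathbb{C}^n$), and observing the resulting norm and the quantity $\|A + U^*CU\|$ both split as maxima over the $q$- and $p$-parts with the $q$-parts controlled. Granting that reduction, Lemma 2.9 on $p\mathbb{C}^n$ yields $\|Dp + U^*DpU\| = 2\|Dp\|$ directly, which is the assertion.

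I expect the main obstacle to be the bookkeeping in the last paragraph: verifying that padding an admissible pair on $p\mathbb{C}^n$ with $q$-blocks yields an admissible pair on $\mathbb{C}^n$ and that the relevant norms decompose as stated, so that the Problem 5 hypothesis genuinely descends to the reducing subspace $p\mathbb{C}^n$. Once that is in place, the conclusion is just Lemma 2.9 applied there.
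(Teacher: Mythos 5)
Your overall framework --- pad a test pair on $p{\Bbb C}^{n}$ with $q$-blocks, use the Problem 5 hypothesis on the full space, and read off the $p$-block of the resulting norm inequality --- is the right one, but the execution has a genuine gap at exactly the point you flag as ``the main obstacle,'' and the symmetric choice you propose cannot be made to work. First, a small error: with $A=A'+q$, $C=C'+q$ one has $DA^{-1}D = Dp(A')^{-1}Dp + D^{2}q$, so the admissibility condition $C\geq DA^{-1}D$ requires $q\geq D^{2}q$, i.e.\ $\|Dq\|\leq 1$, which is not given. More seriously, even after fixing the padding to $A=A'+\mu q$, $C=C'+\nu q$ with $\mu\nu\geq\|Dq\|^{2}$, the $q$-part of $\|A+U^{*}CU\|$ equals $\mu+\nu\geq 2\|Dq\|$, while with your intended test pair $A'=C'=Dp$ the $p$-part is at most $2\|Dp\|$. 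In the application (the proof of Theorem 2.7) $q$ carries the \emph{largest} eigenvalue of $D$, so $\|Dq\|=\|D\|\geq\|Dp\|$ and the maximum defining $\|A+U^{*}CU\|$ is attained on the $q$-block; the comparison of maxima then tells you nothing about the $p$-blocks. So the hypothesis of Problem 5 does not ``descend'' to $p{\Bbb C}^{n}$ via a symmetric test pair, and your first paragraph's eigenvector argument fails for the same reason (the maximizing eigenvector of $D+U^{*}DU$ may live entirely in $q{\Bbb C}^{n}$).

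The missing idea is an \emph{asymmetric} scaling that forces the $p$-block to dominate. The paper takes $A=kDp+q$ and $C=DA^{-1}D$, so that
$$
A+U^{*}DA^{-1}DU=\Bigl(kDp+\tfrac{1}{k}U^{*}DpU\Bigr)+(1+U^{*}D^{2}U)q ,
$$
an orthogonal sum whose $p$-part has norm growing like $k\|Dp\|$ while the $q$-part stays bounded; for $k$ large the maximum is attained on the $p$-part. Comparing with the lower bound $(k+\tfrac{1}{k})\|Dp\|$ for the left-hand side and the triangle-inequality upper bound $k\|Dp\|+\tfrac{1}{k}\|Dp\|$ yields the equality $\|kDp+\tfrac{1}{k}U^{*}DpU\|=\|kDp\|+\|\tfrac{1}{k}U^{*}DpU\|$ for the \emph{unequal} coefficients $(k,\tfrac{1}{k})$. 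One then needs Lemma 2.8 (the Abramovich--Aliprantis--Burkinshaw rescaling lemma) to convert this into the same equality for coefficients $(1,1)$, which is the assertion. Both ingredients --- the large-$k$ asymmetry and the rescaling lemma --- are absent from your proposal and are essential; without them the reduction to $p{\Bbb C}^{n}$ does not go through.
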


\begin{proof}
We set 
$$
A=kDp+q
$$ 
where $k$ is a positive constant. Later we will take $k$ large enough. 
By the assumption we have 
$$
\Bigl\|
\begin{bmatrix}
A&D\\
D&DA^{-1}D
\end{bmatrix}
\Bigr\|
=||A+A^{-\frac{1}{2}}D^{2}A^{-\frac{1}{2}}||
\leq ||A+U^{*}DA^{-1}DU||.
$$
We see that 
$$
||A+A^{-\frac{1}{2}}D^{2}A^{-\frac{1}{2}}||\geq 
||(A+A^{-\frac{1}{2}}D^{2}A^{-\frac{1}{2}})p||
=\Bigl(k+\dfrac{1}{k}\Bigr)||Dp||.
$$ 
Thus we conclude that 
$$
\Bigl(k+\dfrac{1}{k}\Bigr)||Dp||\leq ||A+U^{*}DA^{-1}DU||.
$$
On the other hand we observe 
\begin{align*}
 ||A+U^{*}DA^{-1}DU||&=
||
kDp+\dfrac{1}{k}U^{*}DpU+(1+U^{*}D^{2}U)q
||\\
&=
\max\{
||kDp+\dfrac{1}{k}U^{*}DpU||,\ \ 
||(1+U^{*}D^{2}U)q||
\}
\end{align*} 
because the operator $kDp+\dfrac{1}{k}U^{*}DpU$ 
is orthogonal to $(1+U^{*}D^{2}U)q$. 
(Recall that both $p$ and $q$ commute with $D$ and $U$.) 
If $Dp=0$, we have nothing to do. If $Dp\not=0$, we can take 
the constant $k>0$ large enough such that 
$$
||kDp+\dfrac{1}{k}U^{*}DpU||\geq
k||Dp||-\dfrac{1}{k}||U^{*}DpU||\geq 
||(1+U^{*}D^{2}U)q||
$$  
and hence 
$$
||A+U^{*}DA^{-1}DU||=||kDp+\dfrac{1}{k}U^{*}DpU||.
$$ 
Then we have 
$$
\Bigl(k+\dfrac{1}{k}\Bigr)||Dp||\leq ||kDp+\dfrac{1}{k}U^{*}DpU||
\leq 
||kDp||+||\dfrac{1}{k}U^{*}DpU||=(k+\dfrac{1}{k})||Dp||.
$$ 
That is, we get 
$$
||kDp+\dfrac{1}{k}U^{*}DpU||=||kDp||+||\dfrac{1}{k}U^{*}DpU||.
$$ 
By the previous lemma we have the desired statement. 
\end{proof}

\noindent{\it Proof of Theorem 2.7.} 
Since each eigenvalue of $D$ has multiplicity 1, by lemma 2.6 
there exists a rank one projection $q_{1}$ such that 
$Dq_{1}=q_{1}D=||D||q_{1}$ and $Uq_{1}=q_{1}U$. We set $p_{1}=1-q_{1}$. 
Then applying lemma 2.9 to $q_{1}$ and $p_{1}$, we obtain 
$$
||Dp_{1}+U^{*}Dp_{1}U||=2||Dp_{1}||.
$$ 
Then by the proof of lemma 2.6, we can find a unit vector $\xi=p_{1}\xi$ such that 
$
D\xi=||Dp_{1}||\xi
$ and 
$
DU\xi=||Dp_{1}||U\xi
$. Since the eigenvalue $||Dp||$ of $D$ has multiplicity 1, 
we can find a rank 1 projection $q_{2}\leq p_{1}$ such that 
$Dq_{2}=q_{2}D=||Dp_{1}||q_{2}$ and $Uq_{2}=q_{2}U$. We set $p_{2}=1-(q_{1}+q_{2})$. 
By applying 
lemma 2.9 again, we get 
$$
||Dp_{2}+U^{*}Dp_{2}U||=2||Dp_{2}||. 
$$ 
By continuing this procedure, we 
can construct mutually orthogonal rank 1 projections 
$q_{1},q_{2},\cdots,q_{n}$ such that $Uq_{j}=q_{j}U$ and 
$
D=\lambda_{1}q_{1}+\cdots+\lambda_{n}q_{n}
$, $(\lambda_{1}>\lambda_{2}>\cdots>\lambda_{n})$. 
Then we conclude that 
$
UD=DU.
$ \qed

\end{document}